\documentclass[11pt,reqno]{amsart}
\usepackage{url}
\usepackage{parskip}
\usepackage{mathtools,amsthm,amssymb}  
\usepackage[english]{babel}
\usepackage[utf8]{inputenc}
\usepackage[T1]{fontenc}

\textwidth=6.85in
\textheight=9.55in
\headheight=12pt
\calclayout

\newcommand{\N}{{\mathbb N}}

\newcommand{\restring}[1]{\mathbb{Z}_{#1}}

\theoremstyle{plain}
\numberwithin{equation}{section}
\newtheorem{thm}{Theorem}[section]

\newtheorem{lemma}[thm]{Lemma}

\newtheorem{corollary}[thm]{Corollary}

\begin{document}

\title{A consideration of the Fibonacci sequence modulo m}
\author{Daniel Ambrée}
\thanks{Daniel Ambrée is student of the \glqq Freie Universität Berlin\grqq \ (Germany), \textit{Email address:} danjj1878@gmail.com}

\begin{abstract}
In this paper, we study natural numbers $m$ with $u_{z \pm 1} \equiv -1 \ (m)$ for a $z \in \N$, where $u_n$ is the $n$th Fibonacci number. Furthermore, we want to show for $m \in \N\setminus\{0,1\}$:
\begin{displaymath}
[\forall p \ \text{prim}: p^2 \nmid u_{\gamma(p)}] \Leftrightarrow [m^2 \mid u_{\gamma(m)} \Rightarrow m \in \{6,12\}].
\end{displaymath}   
\end{abstract}

\vspace*{-\baselineskip}
\maketitle

\setcounter{section}{-1}

\section{Preliminary considerations}

By $\N$, we denote the set of natural numbers including zero and by $\mathbb{P}$ the set of all primes. Let $(u_n)_{n \in \N}$ be the Fibonacci sequence with $u_{0} := 0$, $u_{1} := 1$ and ${u_{n} := u_{n-1}+u_{n-2}}$ for $n \geq 2$ and let ${u_{-1}:=u_1-u_0=1}$. This gives us the following lemma, which can be proven by complete induction.

\begin{lemma} 
\label{0.1}
{\rm{(cf. \cite[2.1 (i)]{Behrends})}}
Let $P:=\begin{pmatrix} 0 & 1\\ 1 & 1 \end{pmatrix}$. Then $P^{n}= \begin{pmatrix} u_{n-1} & u_{n}\\ u_{n} & u_{n+1} \end{pmatrix}$ for $n \in \N$.
\end{lemma}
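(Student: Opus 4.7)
The plan is a straightforward induction on $n$, exactly as the author suggests. Before starting, I would note that the convention $u_{-1}=1$ introduced just before the lemma is precisely what makes the $n=0$ case work, so the definition is not just decorative.

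For the base case I would take $n=0$: then $P^0$ is the $2\times 2$ identity matrix, and the right-hand side reads
\[ \begin{pmatrix} u_{-1} & u_0 \\ u_0 & u_1 \end{pmatrix} = \begin{pmatrix} 1 & 0 \\ 0 & 1 \end{pmatrix}, \]
so both sides agree. (If one prefers to avoid $u_{-1}$, the base $n=1$ is equally immediate: $P^1=P$ matches $\begin{pmatrix} u_0 & u_1 \\ u_1 & u_2 \end{pmatrix} = \begin{pmatrix} 0 & 1 \\ 1 & 1 \end{pmatrix}$.)

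For the inductive step, assuming the formula holds for some $n\in\N$, I would simply multiply by $P$ on the right and use the Fibonacci recurrence to reshape the entries:
\[ P^{n+1} = \begin{pmatrix} u_{n-1} & u_n \\ u_n & u_{n+1} \end{pmatrix}\begin{pmatrix} 0 & 1 \\ 1 & 1 \end{pmatrix} = \begin{pmatrix} u_n & u_{n-1}+u_n \\ u_{n+1} & u_n+u_{n+1} \end{pmatrix} = \begin{pmatrix} u_n & u_{n+1} \\ u_{n+1} & u_{n+2} \end{pmatrix}, \]
which is exactly the claimed formula with $n$ replaced by $n+1$.

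I do not expect any genuine obstacle here; the statement is essentially a reformulation of the defining recurrence in matrix language. The only step that needs a moment of care is the bookkeeping of indices in the base case, which is handled cleanly by the preliminary convention $u_{-1}:=u_1-u_0=1$. Everything else is a two-line matrix multiplication followed by one application of $u_{k-1}+u_k=u_{k+1}$.
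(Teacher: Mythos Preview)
Your proof is correct and follows exactly the approach the paper indicates: the paper does not spell out a proof but simply remarks that the lemma ``can be proven by complete induction,'' and your base case (using the convention $u_{-1}=1$) together with the one-line inductive matrix multiplication is precisely that argument.
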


We write $\restring{m}$ for the set $\{0,1,...,m-1\}$ provided with the usual addition and multiplication modulo $m$. We note that $\restring{m}$ is a ring and for prime numbers even a field (cf. \cite[S.1]{Behrends}). \\
We call $\gamma(m)$ the period of $m$ if $\gamma(m)$ is the smallest positive number $l$ with $u_l \equiv 0 \ (m)$ and $u_{l+1} \equiv 1 \ (m)$ or equivalent: $P^{l}$ is the identity matrix over the ring $\restring{m}$ (cf. \cite[S.4]{Behrends}). Thus, $\gamma(m)$ is the order of $P$ considered as an element in the finite group ${\rm{GL}}_2(\restring{m})$ of the invertible $(2 \times 2)$-matrices with entries $\text{in} \ \restring{m}$. 

\begin{thm}
\label{0.2}
{\rm{(cf. \cite[S.527]{Wall})}}
Let $\epsilon:={\rm{max}}\{a \in \N: p^a \mid u_{\gamma(p)}\}$ and $p \geq 3$ a prime number. Then 
\begin{displaymath}
\gamma(p^k)=\gamma(p) \ \text{for} \ k \in \{1,...,\epsilon\} \ \text{and} \ \gamma(p^{\epsilon+l})=p^{l} \cdot \gamma(p) \ \text{for} \ l \in \N.
\end{displaymath}
\end{thm}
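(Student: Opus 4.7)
By Lemma \ref{0.1}, $\gamma(p^{k})$ is the order of $P$ in the finite group ${\rm GL}_{2}(\restring{p^{k}})$, and since reduction $\restring{p^{k}} \to \restring{p}$ is a ring homomorphism, $\pi := \gamma(p)$ divides $\gamma(p^{k})$ for every $k$. The plan is to compute the exact $p$-adic precision with which $P^{\pi p^{l}}$ equals the identity, for each $l \geq 0$, and then to read off both claims of the theorem by divisibility considerations.

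I would start with the base case $l = 0$: show $P^{\pi} \equiv I \pmod{p^{\epsilon}}$ but $P^{\pi} \not\equiv I \pmod{p^{\epsilon+1}}$. The non-congruence is immediate, since the off-diagonal entry of $P^{\pi}$ equals $u_{\pi}$, whose $p$-adic valuation is exactly $\epsilon$. For the congruence, it suffices to show $v_{p}(u_{\pi-1} - 1) \geq \epsilon$, as the other diagonal entry is then handled by $u_{\pi+1} = u_{\pi-1} + u_{\pi}$. From $\det P^{\pi} = (-1)^{\pi}$ combined with $u_{\pi \pm 1} \equiv 1 \pmod p$ one gets $(-1)^{\pi} \equiv 1 \pmod p$, so $p \geq 3$ forces $\pi$ to be even. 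Then $u_{\pi-1} u_{\pi+1} = 1 + u_{\pi}^{2}$, and substituting the recurrence yields the key identity $(u_{\pi-1} - 1)(u_{\pi-1} + 1) = u_{\pi} u_{\pi-2}$. The factors $u_{\pi-2} \equiv -1 \pmod p$ (by the recurrence applied modulo $p$) and $u_{\pi-1} + 1 \equiv 2 \pmod p$ are both $p$-adic units, so $v_{p}(u_{\pi-1} - 1) = v_{p}(u_{\pi}) = \epsilon$.

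The induction step is the classical lifting lemma: if $k \geq 1$ and $P^{n} = I + p^{k} M$ with $M \not\equiv 0 \pmod p$, then $P^{pn} \equiv I + p^{k+1} M \pmod{p^{k+2}}$. Expand $(I + p^{k} M)^{p}$ by the binomial theorem; each term $\binom{p}{i} p^{ki} M^{i}$ with $2 \leq i \leq p-1$ has $p$-adic valuation at least $1 + ki \geq k + 2$ (since $v_{p}\binom{p}{i} = 1$), and the $i = p$ term has valuation at least $kp \geq k + 2$, using $p \geq 3$ and $k \geq 1$; only $i = 0, 1$ survive modulo $p^{k+2}$. Iterating this from the base case yields $P^{\pi p^{l}} \equiv I \pmod{p^{\epsilon+l}}$ and $P^{\pi p^{l}} \not\equiv I \pmod{p^{\epsilon+l+1}}$ for every $l \geq 0$.

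To finish: for $1 \leq k \leq \epsilon$, $P^{\pi} \equiv I \pmod{p^{k}}$ gives $\gamma(p^{k}) \mid \pi$, and combined with $\pi \mid \gamma(p^{k})$ this forces $\gamma(p^{k}) = \pi$. For $k = \epsilon + l$ with $l \geq 1$, the constraints $\pi \mid \gamma(p^{k}) \mid \pi p^{l}$ give $\gamma(p^{k}) = \pi p^{j}$ for some $0 \leq j \leq l$, and any $j < l$ is excluded, because $P^{\pi p^{j}} \not\equiv I \pmod{p^{\epsilon+j+1}}$ together with $\epsilon + j + 1 \leq \epsilon + l = k$ rules out $P^{\pi p^{j}} \equiv I \pmod{p^{k}}$; hence $j = l$ and $\gamma(p^{\epsilon+l}) = p^{l} \gamma(p)$. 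The only nontrivial ingredient is the base case: $\epsilon$ is defined only through the off-diagonal entry $u_{\pi}$, so some work with the determinant identity $u_{n-1} u_{n+1} - u_{n}^{2} = (-1)^{n}$ and the Fibonacci recurrence is needed to see that the diagonal entries of $P^{\pi} - I$ are just as $p$-divisible, and it is exactly there that the hypothesis $p \geq 3$ is essential.
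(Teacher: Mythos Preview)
The paper does not supply a proof of Theorem~\ref{0.2}; it merely quotes the result from Wall \cite[S.~527]{Wall} and uses it as a black box. Your proposal, by contrast, is a complete and correct self-contained argument, so there is nothing in the paper to compare against at the level of proof details.

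A few remarks on your argument itself. The base-case computation is the heart of the matter, and you handle it cleanly: the identity $(u_{\pi-1}-1)(u_{\pi-1}+1)=u_{\pi}u_{\pi-2}$, obtained from $u_{\pi-1}u_{\pi+1}-u_{\pi}^{2}=(-1)^{\pi}=1$ together with the recurrence, is exactly what pins down $v_{p}(u_{\pi-1}-1)=\epsilon$, and your observation that $u_{\pi-1}+1\equiv 2$ and $u_{\pi-2}\equiv -1\pmod p$ are $p$-adic units (using $p\geq 3$) is correct. The matrix lifting step $(I+p^{k}M)^{p}\equiv I+p^{k+1}M\pmod{p^{k+2}}$ is the standard ``lifting the exponent'' argument transported to $2\times 2$ matrices; your valuation bookkeeping on the binomial terms is accurate, and the check $kp\geq k+2$ for the top term is precisely where $p\geq 3$ enters a second time. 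The final divisibility squeeze $\pi\mid\gamma(p^{k})\mid\pi p^{l}$ together with the non-congruence $P^{\pi p^{j}}\not\equiv I\pmod{p^{\epsilon+j+1}}$ for $j<l$ correctly forces $\gamma(p^{\epsilon+l})=p^{l}\gamma(p)$.

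In short: your proof is sound, and in fact it is essentially Wall's original argument recast in the matrix language the present paper adopts via Lemma~\ref{0.1}; the paper simply chose to cite rather than reproduce it.
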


We also remember $2 \mid \gamma(p) \mid \gamma(p^e)$ (cf. \cite[S.4]{Behrends}, Theorem \ref{0.2}). In order to determine the period of natural numbers, the following theorem is helpful, where $\text{lcm}(a_1,...,a_r)$ is abbreviated with $\text{lcm}[a_i]_1^r$.

\begin{thm}
\label{0.3}
{\rm{(cf. \cite[S.526]{Wall})}}
Let $\prod_{i=1}^r p_i^{e_i}$ be the (except for the order of the factors) unique prime factorization of $m \geq 2$. Then $\gamma(m)={\rm{lcm}}[\gamma(p_i^{e_i})]_1^r$.
\end{thm}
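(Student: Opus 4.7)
The plan is to use the Chinese Remainder Theorem together with the characterization of $\gamma(m)$ as the order of $P$ in $\mathrm{GL}_2(\restring{m})$, already given in the excerpt. First I would observe that since $p_1^{e_1},\dots,p_r^{e_r}$ are pairwise coprime, the standard ring isomorphism
\begin{equation*}
\varphi\colon \restring{m} \;\longrightarrow\; \restring{p_1^{e_1}} \times \cdots \times \restring{p_r^{e_r}}, \qquad a \bmod m \;\longmapsto\; (a \bmod p_1^{e_1},\dots,a \bmod p_r^{e_r})
\end{equation*}
induces, componentwise on matrix entries, a group isomorphism $\Phi\colon \mathrm{GL}_2(\restring{m}) \to \prod_{i=1}^r \mathrm{GL}_2(\restring{p_i^{e_i}})$. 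The key point is then that $\Phi$ sends $P$ (over $\restring{m}$) to the tuple $(P,\dots,P)$ (one copy over each $\restring{p_i^{e_i}}$), because the entries $0$ and $1$ of $P$ reduce to $0$ and $1$ in every factor.

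Second, I would invoke the elementary fact that in a direct product of groups the order of a tuple equals the lcm of the orders of its components. Combined with the previous step and Lemma \ref{0.1}, this gives
\begin{equation*}
\gamma(m) \;=\; \mathrm{ord}_{\mathrm{GL}_2(\restring{m})}(P) \;=\; \mathrm{ord}\bigl(\Phi(P)\bigr) \;=\; \mathrm{lcm}\bigl[\mathrm{ord}_{\mathrm{GL}_2(\restring{p_i^{e_i}})}(P)\bigr]_1^r \;=\; \mathrm{lcm}[\gamma(p_i^{e_i})]_1^r,
\end{equation*}
which is exactly the claim.

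If one wishes to avoid invoking the product-of-groups fact as a black box, the same argument can be carried out by hand in two divisibility steps: on the one hand $P^{\gamma(m)} = I$ over $\restring{m}$ reduces modulo each $p_i^{e_i}$ to $P^{\gamma(m)} = I$ over $\restring{p_i^{e_i}}$, so $\gamma(p_i^{e_i}) \mid \gamma(m)$ for every $i$, whence $L := \mathrm{lcm}[\gamma(p_i^{e_i})]_1^r$ divides $\gamma(m)$; on the other hand $P^L = I$ holds modulo each $p_i^{e_i}$, i.e.\ $p_i^{e_i} \mid u_L$ and $p_i^{e_i} \mid u_{L+1}-1$ for all $i$, and since the $p_i^{e_i}$ are pairwise coprime this forces $m \mid u_L$ and $m \mid u_{L+1}-1$, so $\gamma(m) \mid L$.

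I do not expect any real obstacle here: the argument is entirely formal, resting only on CRT and on the fact that order in a product is the lcm of orders. The only point worth stating carefully is that the reduction maps on matrices respect multiplication, so that the identification of $\gamma$ with the order of $P$ really is preserved under $\Phi$.
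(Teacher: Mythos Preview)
Your argument is correct. The paper itself does not prove Theorem~\ref{0.3}; it simply quotes the result from Wall~\cite{Wall} without giving a proof. Your approach via the Chinese Remainder Theorem and the identification of $\gamma(m)$ with the order of $P$ in $\mathrm{GL}_2(\restring{m})$ (which the paper explicitly records just before the theorem) is the standard one and goes through without issue; the alternative two-sided divisibility version you sketch is equally valid and is essentially how Wall phrases it. The reference to Lemma~\ref{0.1} in your displayed chain is not strictly needed---the equality $\gamma(m)=\mathrm{ord}_{\mathrm{GL}_2(\restring{m})}(P)$ is already part of the paper's definition of $\gamma$---but it does no harm.
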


Let $\upsilon(m)$ be the number of zeros in $(u_i \ {\rm{mod}} \ m)_{i \in \{0,...,\gamma(m)-1\}}$ (cf. \cite[S.6]{Behrends}). Furthermore, we denote by $\alpha(m)$ the smallest positive number $z$ with $u_z \equiv 0 \ (m)$ (cf. \cite[C(2+3)]{Renault}). Therefore, lemma \ref{0.4} is understandable, whereby here only (v) is proven.

\begin{lemma}
\label{0.4}
The following holds:
\begin{enumerate}
\item
$u_{2n}=u_n \cdot (u_{n-1}+u_{n+1})$ {\rm{(cf. \cite[S.9]{Lausch})}}.
\item
$u_n \equiv 0 \ (p) \Rightarrow u_{n \pm 1} \not \equiv 0 \ (p)$ {\rm{(cf. \cite[3.26]{Lausch})}}.
\item
$\gamma(m)=\upsilon(m) \cdot \alpha(m)$ {\rm{(cf. \cite[C(8)]{Renault})}}.
\item
$\upsilon(m) \in \{1,2,4\}$ {\rm{(cf. \cite[C(9)]{Renault})}}.
\item
$\upsilon(p^e)=\upsilon(p)$ for $p \in \mathbb{P}\setminus\{2\}$ {\rm{(cf. \cite[E(1)]{Renault})}}.
\item
$u_{m-n}=(-1)^n \cdot (u_m \cdot u_{n+1}-u_{m+1} \cdot u_n)$ for $m \geq n$ {\rm{(cf. \cite[1.11]{Lausch})}}.
\end{enumerate}
\end{lemma}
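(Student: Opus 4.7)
My plan is to reduce the statement to the lifting formula $\alpha(p^e) = p^{\max(0,\, e-\epsilon)} \alpha(p)$: combined with Theorem \ref{0.2} and part (iii), this yields $\upsilon(p^e) = \gamma(p^e)/\alpha(p^e) = \gamma(p)/\alpha(p) = \upsilon(p)$. As a preliminary I would verify $v_p(u_{\alpha(p)}) = \epsilon$: part (i) gives $u_{2n} = u_n(u_{n-1}+u_{n+1}) = u_n(2u_{n-1}+u_n)$, so whenever $\alpha(p) \mid n$ part (ii) yields $p \nmid u_{n-1}$, and for odd $p$ this forces $v_p(u_{2n}) = v_p(u_n)$; since $\upsilon(p) \in \{1,2,4\}$ by (iv), iterating gives $v_p(u_{\gamma(p)}) = v_p(u_{\alpha(p)})$.

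The case $e \leq \epsilon$ is quick: $p^e \mid p^\epsilon \mid u_{\alpha(p)}$ gives $\alpha(p^e) \leq \alpha(p)$, while $p \mid u_{\alpha(p^e)}$ forces $\alpha(p) \mid \alpha(p^e)$, so $\alpha(p^e) = \alpha(p)$.

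For $e = \epsilon + l$ with $l \geq 1$ I would reduce the task to showing $v_p(u_{k\alpha(p)}) = v_p(k) + \epsilon$ for every $k \geq 1$. The tool is Lemma \ref{0.1}, which gives $P^{\alpha(p)} = u_{\alpha(p)-1}\, I + u_{\alpha(p)}\, P$ with $v_p(u_{\alpha(p)-1}) = 0$ (by part (ii)) and $v_p(u_{\alpha(p)}) = \epsilon$. Binomial expansion of $(u_{\alpha(p)-1}\, I + u_{\alpha(p)}\, P)^k$ (valid since $I$ and $P$ commute) and extracting the $(1,2)$-entry yields
\begin{equation*}
u_{k\alpha(p)} = \sum_{j \geq 1} \binom{k}{j}\, u_{\alpha(p)-1}^{\,k-j}\, u_{\alpha(p)}^{\,j}\, u_j .
\end{equation*}
The $j = 1$ term has valuation exactly $v_p(k) + \epsilon$; for $j \geq 2$, the identity $v_p\!\left(\binom{k}{j}\right) \geq v_p(k) - v_p(j)$ (derived from $k\binom{k-1}{j-1} = j\binom{k}{j}$) gives the lower bound $v_p(k) - v_p(j) + j\epsilon$ on the corresponding term. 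Once proved, $\alpha(p^{\epsilon+l})$ is the smallest $k\alpha(p)$ with $v_p(k) \geq l$, namely $p^l \alpha(p)$; since Theorem \ref{0.2} gives $\gamma(p^{\epsilon+l}) = p^l\, \gamma(p)$, part (iii) then yields $\upsilon(p^{\epsilon+l}) = \upsilon(p)$.

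The main obstacle is verifying that the $j \geq 2$ terms strictly exceed $v_p(k) + \epsilon$, i.e.\ $(j-1)\epsilon > v_p(j)$. Writing $s := v_p(j)$: for $s = 0$ this is just $\epsilon \geq 1$, while for $s \geq 1$ one needs the elementary inequality $p^s - 1 \geq s + 1$ to control $j - 1 \geq p^s - 1$. This inequality holds by induction for $p \geq 3$ but fails at $p = 2,\, s = 1$, precisely explaining why the lemma excludes $p = 2$.
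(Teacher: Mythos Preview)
Your argument is correct and in fact establishes more than needed: you prove the exact lifting formula $\alpha(p^e)=p^{\max(0,\,e-\epsilon)}\alpha(p)$ via a full $p$-adic valuation computation $v_p(u_{k\alpha(p)})=v_p(k)+\epsilon$. The paper takes a much shorter route that avoids computing $\alpha(p^e)$ altogether. For one inequality it notes that $\alpha(p)\mid\alpha(p^e)$ (trivially, since $p\mid u_{\alpha(p^e)}$), rewrites this via (iii) and Theorem~\ref{0.2} as $\upsilon(p^e)\mid p^r\upsilon(p)$, and then uses (iv) together with $p$ odd to strip the $p^r$ and obtain $\upsilon(p^e)\mid\upsilon(p)$. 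For the reverse inequality $\upsilon(p^e)\geq\upsilon(p)$ it does a short case split on $\upsilon(p)\in\{1,2,4\}$: the case $\upsilon(p)=1$ is immediate from (iv), and for $\upsilon(p)\in\{2,4\}$ one applies the doubling identity (i) once or twice to $u_{\gamma(p^e)}$, using (ii) and $p\neq 2$ to see that the cofactor $u_{n-1}+u_{n+1}\equiv 2u_{n-1}$ is a unit mod $p$, whence $p^e\mid u_{\gamma(p^e)/2}$ (respectively $p^e\mid u_{\gamma(p^e)/4}$). Your approach buys a genuine lifting-the-exponent lemma for Fibonacci numbers as a byproduct, while the paper's argument is more economical and stays entirely within the doubling structure already recorded in parts (i)--(iv).
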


\begin{proof} 
(of (v))
From \glqq $p \mid u_k \Rightarrow \alpha(p) \mid k$\grqq \ follows $\alpha(p) \mid \alpha(p^e) \underset{(iii)}{\Leftrightarrow} \upsilon(p^e) \mid \frac{\gamma(p^e)}{\gamma(p)}\upsilon(p)\underset{\ref{0.2}}{=} p^r\upsilon(p)$ for the right $r \in \N$ and due to $\text{gcd}(\upsilon(p^e),p^r)\underset{(iv)}{=}1$ even $\upsilon(p^e) \mid \upsilon(p)$ (or $\upsilon(p^e) \leq \upsilon(p)$). Conversely is ${\upsilon(p^e) \geq \upsilon(p)}$, which follows for $\upsilon(p)=1$ from (iv), for $\upsilon(p)=2$ from $p^e \mid u_{\gamma(p^e)}\underset{(i)}{=}u_{\frac{\gamma(p^e)}{2}} \cdot (u_{\frac{\gamma(p^e)}{2}-1}+u_{\frac{\gamma(p^e)}{2}+1})$ together with $u_{\frac{\gamma(p^e)}{2}-1}+u_{\frac{\gamma(p^e)}{2}+1} \equiv 2u_{\frac{\gamma(p^e)}{2}-1} \underset{(ii)}{\not \equiv} 0 \ (p)$ and for $\upsilon(p)=4$ similar to $\upsilon(p)=2$.
\end{proof}

\section{Good numbers}

In this section, let $p \neq 2$ be a prime number and $\prod_{i=1}^r p_i^{e_i}$ the (except for the order of the factors) unique prime factorization of $m \geq 2$. We call $m \in \N$ a good number if $P^{\frac{\gamma(m)}{2}}=-{\rm{Id}}$ in $\restring{m}$. Furthermore, $m$ is a prime number, we call $m$ a good prime (cf. \cite[S.5]{Behrends}). We note that $m=2^k \cdot n$ with $k \geq 1$ and odd $n$ is never good because otherwise $P^{\frac{\gamma(m)}{2}}=-\text{Id}$ in $\restring{2^k}$, whereby the goodness of $2^k$ follows in contradiction to $\frac{\gamma(2)}{2} \not \in \N$, $u_{\frac{\gamma(2^2)}{2}} \not \equiv 0 \ (4)$ and $u_{\frac{\gamma(2^k)}{2}-1}\underset{\ref{0.2}}{=}u_{\gamma(2^{k-1})-1} \equiv 1 \ (2^{k-1})$ for $k \geq 3$. In the following, therefore $m$ is odd.

\begin{thm} 
\label{1.1}
$p^e$ is a good number if and only if $p$ is a good prime.
\end{thm}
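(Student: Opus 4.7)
By Lemma~\ref{0.1}, ``$m$ is good'' is equivalent to $P^{\gamma(m)/2}\equiv -\mathrm{Id}$ in $\restring{m}$, which presupposes $2\mid\gamma(m)$ (automatic for $m$ a power of an odd prime). Write $A:=P^{\gamma(p)/2}$. By Theorem~\ref{0.2}, $\gamma(p^e)=p^{r}\gamma(p)$ with $r:=\max(0,e-\epsilon)$, so $P^{\gamma(p^e)/2}=A^{p^r}$, and the task reduces to relating $A\equiv -\mathrm{Id}\pmod p$ with $A^{p^r}\equiv -\mathrm{Id}\pmod{p^e}$. The hypothesis $p\neq 2$ is used throughout in two essential forms: $p^r$ is odd, and $2$ is a unit modulo every power of $p$.

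For the direction $p^e$ good $\Rightarrow$ $p$ good, I reduce $A^{p^r}\equiv -\mathrm{Id}\pmod{p^e}$ modulo $p$. Since $A^2=P^{\gamma(p)}\equiv\mathrm{Id}\pmod p$, the residue $A\bmod p$ has order $1$ or $2$. Order $1$ would force $A^{p^r}\equiv\mathrm{Id}\not\equiv -\mathrm{Id}\pmod p$ (as $p\neq 2$), a contradiction; so $A$ has order $2$ mod $p$, and since $p^r$ is odd we get $A^{p^r}\equiv A\pmod p$. Hence $A\equiv -\mathrm{Id}\pmod p$, i.e.\ $p$ is good.

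For the converse I lift in two stages. \emph{Stage~1} strengthens $A\equiv -\mathrm{Id}\pmod p$ to $A\equiv -\mathrm{Id}\pmod{p^\epsilon}$, using $A^2\equiv\mathrm{Id}\pmod{p^\epsilon}$ (which holds because $\gamma(p^\epsilon)=\gamma(p)$ by Theorem~\ref{0.2}). Inductively, if $A=-\mathrm{Id}+p^{k}Y$ and $k<\epsilon$, the identity $A^2=\mathrm{Id}-2p^{k}Y+p^{2k}Y^2$ combined with $A^2\equiv\mathrm{Id}\pmod{p^\epsilon}$ gives $-2Y+p^{k}Y^2\equiv 0\pmod{p^{\epsilon-k}}$; reducing modulo $p$ and using that $2$ is invertible forces $Y\equiv 0\pmod p$, hence $A\equiv -\mathrm{Id}\pmod{p^{k+1}}$. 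The induction terminates at $k=\epsilon$, which already settles the case $e\leq\epsilon$.

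\emph{Stage~2}, for $e=\epsilon+l$ with $l\geq 1$: I induct on $l$ to obtain $A^{p^{l}}\equiv -\mathrm{Id}\pmod{p^{\epsilon+l}}$. Writing $A^{p^{l-1}}=-\mathrm{Id}+p^{\epsilon+l-1}B$ and expanding
\begin{displaymath}
A^{p^{l}}=(-\mathrm{Id}+p^{\epsilon+l-1}B)^{p}=-\mathrm{Id}+p^{\epsilon+l}B+\sum_{k=2}^{p}\binom{p}{k}(-\mathrm{Id})^{p-k}(p^{\epsilon+l-1}B)^{k},
\end{displaymath}
the linear term simplifies via $(-\mathrm{Id})^{p-1}=\mathrm{Id}$ (as $p-1$ is even), and each summand with $2\leq k\leq p-1$ is divisible by $p^{1+k(\epsilon+l-1)}$ because $p\mid\binom{p}{k}$, while the $k=p$ summand is divisible by $p^{p(\epsilon+l-1)}$; both exponents are at least $\epsilon+l$ under $\epsilon,l\geq 1$. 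So the tail vanishes mod $p^{\epsilon+l}$, proving $A^{p^{l}}\equiv -\mathrm{Id}\pmod{p^{\epsilon+l}}$. Taking $l=e-\epsilon$ finishes the proof. The main obstacle I anticipate is Stage~1: the mod-$p$ datum of goodness must be boosted to a mod-$p^{\epsilon}$ statement, and that is exactly where the invertibility of $2$ modulo $p$ is needed; Stage~2 is then a routine $p$-adic expansion that rests on $p\mid\binom{p}{k}$ for $1\leq k\leq p-1$.
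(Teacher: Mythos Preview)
Your proof is correct but follows a genuinely different route from the paper's. The paper performs no explicit $p$-adic lifting; instead it proves the intermediate equivalence ``$p^e$ is good $\Leftrightarrow \upsilon(p^e)\in\{2,4\}$'' and then invokes Lemma~\ref{0.4}\,(v) ($\upsilon(p^e)=\upsilon(p)$) to transfer this between $p$ and $p^e$. For the forward direction, $\upsilon(p^e)=1$ forces $u_{\gamma(p^e)/2}\not\equiv 0\pmod{p^e}$, so $P^{\gamma(p^e)/2}\neq -\mathrm{Id}$. For the backward direction, $\upsilon(p^e)\in\{2,4\}$ gives $u_{\gamma(p^e)/2}\equiv 0$, hence $P^{\gamma(p^e)/2}=r\cdot\mathrm{Id}$ in $\restring{p^e}$; squaring yields $r^2\equiv 1$, and since $r\not\equiv 1$ and the only square roots of $1$ in $\restring{p^e}$ for odd $p$ are $\pm 1$, one gets $r\equiv -1$. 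Thus all the ``lifting'' is packaged into two off-the-shelf facts: the structure of square roots of $1$ in $\restring{p^e}$ and the invariance of $\upsilon$ under prime powers. Your argument, by contrast, is self-contained (it never touches $\upsilon$ or Lemma~\ref{0.4}\,(v)) at the cost of carrying out the Hensel-type estimates by hand. The paper's approach is shorter and, more to the point, produces the characterisation ``good $\Leftrightarrow \upsilon\in\{2,4\}$'' that is reused in Lemma~\ref{1.3}\,(ii).
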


\begin{proof}
We show \glqq $p^e \ \text{is a good number} \Leftrightarrow \upsilon(p^e) \in \{2,4\}$\grqq, from which the assertion results by lemma \ref{0.4} (v).
From $\upsilon(p^e) \not \in \{2,4\}$ and lemma \ref{0.4} (iv), follows $\upsilon(p^e)=1$ and thus ${u_{\frac{\gamma(p^e)}{2}} \not \equiv 0 \ (p^e)}$ (this shows \glqq $\Rightarrow$\grqq). The other direction follows from $\text{Id}=\left(P^{\frac{\gamma(p^e)}{2}}\right)^2=(r \cdot \text{Id})^2$ in $\restring{p^e}$ which implies $r^2 \equiv 1 \ (p^e)$ and thus $r \equiv -1 \ (p^e)$ due to $P^{\frac{\gamma(p^e)}{2}} \neq \text{Id}$ in $\restring{p^e}$.
\end{proof}

Thus, we can show the main theorem of this chapter, which provides a criterion for classifying odd natural numbers into good and not good numbers. 

\begin{thm} 
\label{1.2}
Let $\gamma(p_i)=2^{k_i} \cdot l_i$ with odd $l_i$. Then $m$ is a good number if and only if all prime numbers $p_i$ are good and $k_1=k_2=...=k_r$.
\end{thm}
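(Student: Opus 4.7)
The plan is to combine the Chinese Remainder Theorem with a careful analysis of how $P^{\gamma(m)/2}$ reduces modulo each prime power $p_i^{e_i}$. First I would use Theorem~\ref{0.2} to observe that $\gamma(p_i^{e_i}) = p_i^{a_i}\gamma(p_i)$ for some $a_i \in \N$; since $p_i$ is odd this preserves the 2-adic valuation, so $v_2(\gamma(p_i^{e_i})) = k_i$, and by Theorem~\ref{0.3} the maximum $K := \max_i k_i$ equals $v_2(\gamma(m))$.

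Next, for each $i$ I set $t_i := \gamma(m)/\gamma(p_i^{e_i}) \in \N$ and note $v_2(t_i) = K - k_i$, so $t_i$ is odd if and only if $k_i = K$. Since the order of $P$ in ${\rm GL}_2(\restring{p_i^{e_i}})$ is $\gamma(p_i^{e_i})$, splitting off the largest multiple of $\gamma(p_i^{e_i})$ from the exponent gives
$$
P^{\gamma(m)/2} \equiv
\begin{cases}
\text{Id} & \text{if } t_i \text{ is even},\\
P^{\gamma(p_i^{e_i})/2} & \text{if } t_i \text{ is odd},
\end{cases}
\pmod{p_i^{e_i}},
$$
where $\gamma(p_i^{e_i})/2$ is an integer because $k_i \geq 1$ (using $2 \mid \gamma(p_i)$).

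By the Chinese Remainder Theorem, $P^{\gamma(m)/2} \equiv -\text{Id} \pmod{m}$ holds iff the same congruence holds modulo every $p_i^{e_i}$. Since $p_i^{e_i} \geq 3$, we have $-\text{Id} \neq \text{Id}$ in $\restring{p_i^{e_i}}$, so any $i$ with $t_i$ even immediately kills the condition. When every $t_i$ is odd, i.e.\ all $k_i$ coincide with $K$, the condition reduces to ``$p_i^{e_i}$ is good'' for each $i$, which by Theorem~\ref{1.1} is equivalent to $p_i$ being a good prime. This gives both directions.

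The main obstacle is the parity bookkeeping of $t_i$: one must be sure that $v_2(\gamma(p_i^{e_i})) = k_i$ irrespective of $e_i$, which is precisely where Theorem~\ref{0.2} combined with the oddness of $p_i$ is used. The rest is a clean CRT argument together with Theorem~\ref{1.1}.
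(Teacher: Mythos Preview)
Your proof is correct and follows essentially the same route as the paper's: both reduce via the Chinese Remainder Theorem to the residues $P^{\gamma(m)/2}\bmod p_i^{e_i}$, compute these as $\pm\mathrm{Id}$ according to the parity of $t_i=\gamma(m)/\gamma(p_i^{e_i})$ (the paper's $a_i$), and use Theorem~\ref{0.2} together with the oddness of $p_i$ to see that $v_2(t_i)=\max_j k_j-k_i$. The only cosmetic difference is that the paper first isolates ``$m$ good $\Rightarrow$ each $p_i$ good'' by reducing modulo $p_i$ and then treats the off-diagonal entry $u_{\gamma(m)/2}$ separately from the diagonal entry $u_{\gamma(m)/2+1}$, whereas you handle the whole matrix $P^{\gamma(m)/2}$ at once.
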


\begin{proof}
If $m$ is good, then $p_i \mid m \mid u_{\frac{\gamma(m)}{2}}$ and $p_i \mid m \mid u_{\frac{\gamma(m)}{2} \pm 1}+1$, whereby $P^{\frac{\gamma(m)}{2}}=-\text{Id}$ in $\restring{p_i}$ follows. On the other hand, from the goodness of all $p_i$ follows the goodness of all $p_i^{e_i}$ (cf. theorem \ref{1.1}) and thus $m=\text{lcm}[p_i^{e_i}]_1^r \mid u_{\frac{\text{lcm}[\gamma(p_i^{e_i})]_1^r}{2}}\underset{\ref{0.3}}{=} u_{\frac{\gamma(m)}{2}}$. From the Chinese Remainder theorem, it follows that $u_{\frac{\gamma(m)}{2}+1}=-1$ in $\restring{m}$ if and only if $u_{\frac{\gamma(m)}{2}+1}=-1$ in all $\restring{p_i^{e_i}}$. The second statement is if we put $u_{\frac{\gamma(m)}{2}+1}=u_{a_i \cdot \frac{\gamma(p_i^{e_i})}{2}+1}$ and considering ${u_{k \cdot \gamma(p_i^{e_i})+1} \equiv 1 \ (p_i^{e_i})}$, equivalent to the oddness of all $a_i$. Accordingly, we have to show: 
\begin{center}
\glqq All $a_i$ are odd $\Leftrightarrow {k_1=...=k_r}$\grqq .
\end{center} 
For this, let $\gamma(p_i^{e_i})\underset{\ref{0.2}}{=} 2^{k_i} \cdot p^{r_i} \cdot l_i:=2^{k_i} \cdot h_i$ with $h_i$ odd. Suppose w.l.o.g. $k_1<\text{max}\{k_2,...,k_r\}$. Then $a_1=2^{\text{max}\{k_2,...,k_r\}-k_1} \cdot \frac{\text{lcm}[h_z]_1^r}{h_1}$ is even (this shows \glqq $\Rightarrow$\grqq). On the other hand, it follows from $k_1=...=k_r$ that $a_i=\frac{\text{lcm}[h_z]_1^r}{h_i}$ is odd (this shows \glqq $\Leftarrow$\grqq).
\end{proof}

The theorem proven above allows links between the $\upsilon(p_i)$ and the classification of a natural number as a good number.
Therefore, we need the following lemma, where $2^n \mid\mid \gamma(p)$ means that $2^n \mid \gamma(p)$ and $k \leq n$ holds for all $k \in \N$ with $2^k \mid \gamma(p)$.

\newpage

\begin{lemma} \hspace{0.1mm}
\label{1.3}
\begin{enumerate}
\item
$p$ is a good prime if and only if $\gamma(p) \equiv 0 \ (4)$ {\rm{(cf. \cite[2.4 (i)]{Behrends})}}.
\item
From $\upsilon(p)=1$ follows $2 \mid\mid \gamma(p)$.
\item
From $\upsilon(p)=2$ follows $2^3 \mid \gamma(p)$.
\item
From $\upsilon(p)=4$ follows $2^2 \mid\mid \gamma(p)$.
\end{enumerate}
\end{lemma}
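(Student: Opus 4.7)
The plan is to prove (ii) quickly from (i) and the internal structure already used in Theorem \ref{1.1}, and to prove (iii) and (iv) together by studying the scalar matrix $P^{\alpha(p)}$ in $\text{GL}_2(\restring{p})$.

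For (ii), the proof of Theorem \ref{1.1} already shows (for the special case $e=1$) that $p$ is a good prime if and only if $\upsilon(p) \in \{2,4\}$. Hence $\upsilon(p)=1$ implies $p$ is not good, and so by (i) we get $4 \nmid \gamma(p)$. Combined with the fact, noted in the paragraph after Theorem \ref{0.2}, that $2 \mid \gamma(p)$ for odd primes, this gives $2 \mid\mid \gamma(p)$.

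For (iii) and (iv), the central observation is that $P^{\alpha(p)}$ is a scalar matrix in $\restring{p}$: since $u_{\alpha(p)} \equiv 0 \pmod p$, the recurrence forces $u_{\alpha(p)+1} \equiv u_{\alpha(p)-1} \pmod p$, so
\[
P^{\alpha(p)} = u_{\alpha(p)-1} \cdot \text{Id} \quad \text{in } \restring{p}.
\]
Its multiplicative order in $\text{GL}_2(\restring{p})$ is $\gamma(p)/\alpha(p) = \upsilon(p)$ by Lemma \ref{0.4}(iii), so $u_{\alpha(p)-1}$ has order $\upsilon(p)$ in $\restring{p}^*$. Taking determinants also yields $u_{\alpha(p)-1}^2 \equiv (-1)^{\alpha(p)} \pmod p$. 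For (iv), $\upsilon(p)=4$ forces $u_{\alpha(p)-1}^2 \equiv -1 \pmod p$, hence $(-1)^{\alpha(p)} \equiv -1 \pmod p$ and (as $p$ is odd) $\alpha(p)$ is odd; then $\gamma(p)=4\alpha(p)$ gives $2^2 \mid\mid \gamma(p)$.

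For (iii), $\upsilon(p)=2$ gives $P^{\alpha(p)} = -\text{Id}$, so $u_{\alpha(p)-1}^2 \equiv 1 \equiv (-1)^{\alpha(p)} \pmod p$ forces $\alpha(p)$ to be even; write $\alpha(p)=2m$. Set $A:=P^m$; then $A^2=P^{\alpha(p)}=-\text{Id}$. Applying Cayley--Hamilton in the form $A^2 = \text{tr}(A)\,A - \det(A)\,\text{Id}$ gives
\[
\text{tr}(A)\cdot A = (\det(A)-1)\cdot \text{Id}.
\]
If $\text{tr}(A) \not\equiv 0 \pmod p$, then $A$ would be a scalar matrix, which would mean $u_m \equiv 0 \pmod p$, contradicting $0<m<\alpha(p)$; so $\text{tr}(A) \equiv 0 \pmod p$, which forces $\det(A)=(-1)^m \equiv 1 \pmod p$ and thus $m$ is even. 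Consequently $4 \mid \alpha(p)$ and $8 \mid \gamma(p)$, and together with (i) (which gives $\upsilon(p)=2 \Rightarrow 4 \mid \gamma(p)$, already guaranteed here) we get the stated divisibility. The main obstacle is finding the right identity for (iii): once one recognizes that the square root $P^m$ of $-\text{Id}$ must have zero trace (via Cayley--Hamilton), the determinant immediately yields parity of $m$; everything else is bookkeeping with Lemma \ref{0.4}(iii) and the scalar form of $P^{\alpha(p)}$.
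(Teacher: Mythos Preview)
Your proof of (ii) is essentially the paper's, and your arguments for (iii) and (iv) are correct but follow a genuinely different line from the paper. The paper argues purely via Fibonacci identities: for (iii) it proves the reflection relation $u_{\gamma(p)/2-k}\equiv(-1)^k u_k\ (p)$ by induction and evaluates it at $k=\gamma(p)/4$; for (iv) it assumes $\gamma(p)/4$ even and derives a contradiction from Lemma~\ref{0.4}(vi). You instead exploit that $P^{\alpha(p)}$ is the scalar matrix $u_{\alpha(p)-1}\cdot\mathrm{Id}$ in $\restring{p}$, read off the order $\upsilon(p)$ of $u_{\alpha(p)-1}$ in $\restring{p}^{*}$, and use the determinant identity $u_{\alpha(p)-1}^{2}\equiv(-1)^{\alpha(p)}$ to pin down the parity of $\alpha(p)$; for (iii) you then pass to the square root $A=P^{\alpha(p)/2}$ and force $\mathrm{tr}(A)\equiv 0$ via Cayley--Hamilton, getting $\det(A)=(-1)^{\alpha(p)/2}\equiv 1$. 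Your approach is more structural and makes transparent why the cases $\upsilon(p)\in\{1,2,4\}$ control the $2$-adic valuation of $\gamma(p)$; the paper's approach avoids any matrix machinery beyond Lemma~\ref{0.1} and stays closer to elementary Fibonacci recurrences. One small remark: your closing clause invoking (i) in the case $\upsilon(p)=2$ is superfluous, since you have already established $8\mid\gamma(p)$ directly.
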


\begin{proof} \hspace{0.1mm}
\begin{enumerate}
\item[(ii)]
The proof of theorem \ref{1.1} shows that only primes $p$ with $\upsilon(p)=1$ are not good. Since $\gamma(p)$ is even, the assertion follows from (i).
\item[(iii)]
With $u_{\frac{\gamma(p)}{2}} \equiv 0 \ (p)$, $u_{\frac{\gamma(p)}{2}-1} \equiv -1 \ (p)$ and $u_{i-1}=u_{i+1}-u_{i}$, we can prove by complete induction $u_{\frac{\gamma(p)}{2}-k} \equiv (-1)^k \cdot u_k \ (p)$, which leads for odd $k=\frac{\gamma(p)}{4} \in \N$ (cf. (i)) to ${u_{\frac{\gamma(p)}{4}} \equiv -u_{\frac{\gamma(p)}{4}} \ (p) \Leftrightarrow p \mid u_{\frac{\gamma(p)}{4}}}$ in contradiction to $\upsilon(p)=2$. Therefore, $\frac{\gamma(p)}{4}$ is even.
\item[(iv)]
Suppose that $\gamma:=\frac{\gamma(p)}{4}$ is even. Subsequently, from $u_\gamma \equiv 0 \ (p)$ and $u_{2\gamma+1} \equiv -1 \ (p)$ follows ${u_{\gamma+1}\underset{\ref{0.4} (vi)}=(-1)^{\gamma} \cdot (u_{2\gamma+1}u_{\gamma+1}-u_{2\gamma+2}u_{\gamma}) \equiv -u_{\gamma+1} \ (p)}$ and thus $u_{\gamma+1} \equiv 0 \ (p)$
in contradiction to lemma \ref{0.4} (ii). \qedhere
\end{enumerate}
\end{proof}

From theorem \ref{1.2} und lemma \ref{1.3}, it follows directly:

\begin{corollary} \hspace{0.1mm}
\label{1.4}
\begin{enumerate}
\item
$m$ is a good number if and only if $k_1=k_2=...=k_r \geq 2$ with $2^{k_i} \mid \mid \gamma(p_i)$.
\item
If $m$ is a good number, then $\upsilon(p_i)=\upsilon(p_1)$ for all $i \in \{1,...,r\}$.
\item
If $\upsilon(p_i)=4$ for all $i \in \{1,...,r\}$, then $m$ is a good number.
\end{enumerate}
\end{corollary}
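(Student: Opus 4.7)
The proof should be a short bookkeeping argument, combining Theorem \ref{1.2} with the four parts of Lemma \ref{1.3}. Throughout, write $\gamma(p_i)=2^{k_i}l_i$ with $l_i$ odd, as in Theorem \ref{1.2}.

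For (i), I would just merge two ingredients. By Theorem \ref{1.2}, $m$ is a good number iff every $p_i$ is a good prime and $k_1=\dots=k_r$. By Lemma \ref{1.3} (i), $p_i$ is a good prime iff $4\mid\gamma(p_i)$, i.e.\ iff $k_i\ge 2$. Combining these two equivalences gives exactly the claimed criterion $k_1=\dots=k_r\ge 2$.

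For (ii), the point is that on the range $k_i\ge 2$ the value of $\upsilon(p_i)$ is actually determined by $k_i$. Indeed, Lemma \ref{1.3} (ii)--(iv) says
\[
\upsilon(p_i)=1\Rightarrow k_i=1,\qquad \upsilon(p_i)=2\Rightarrow k_i\ge 3,\qquad \upsilon(p_i)=4\Rightarrow k_i=2,
\]
and since $\upsilon(p_i)\in\{1,2,4\}$ by Lemma \ref{0.4} (iv), each of these implications is in fact an equivalence: $k_i=1\Leftrightarrow\upsilon(p_i)=1$, $k_i=2\Leftrightarrow\upsilon(p_i)=4$, $k_i\ge 3\Leftrightarrow\upsilon(p_i)=2$. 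If $m$ is good, part (i) forces all $k_i$ equal and $\ge 2$, so either all $k_i=2$ (and all $\upsilon(p_i)=4$) or all $k_i\ge 3$ (and all $\upsilon(p_i)=2$); in either case $\upsilon(p_i)=\upsilon(p_1)$ for every $i$.

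For (iii), Lemma \ref{1.3} (iv) gives $k_i=2$ for every $i$, hence the hypothesis of (i) is satisfied and $m$ is good.

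I do not expect any real obstacle: the only place that requires care is noticing in (ii) that the three implications of Lemma \ref{1.3} (ii)--(iv) partition the possible values of $\upsilon(p_i)$ and the possible 2-adic valuations $k_i\in\{1\}\cup\{2\}\cup\{\ge 3\}$ into matching classes, which upgrades them to equivalences and lets the common value of $k_i$ dictate a common value of $\upsilon(p_i)$.
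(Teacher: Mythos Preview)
Your proof is correct and is exactly the argument the paper has in mind: the paper gives no detailed proof at all, merely stating that the corollary follows directly from Theorem~\ref{1.2} and Lemma~\ref{1.3}, and your write-up spells out precisely how these combine. In particular, your observation in (ii) that the three implications of Lemma~\ref{1.3} (ii)--(iv) upgrade to equivalences because $\upsilon(p_i)\in\{1,2,4\}$ and the ranges $k_i=1$, $k_i=2$, $k_i\ge 3$ partition all possibilities (recall $2\mid\gamma(p_i)$, so $k_i\ge 1$) is the only point requiring thought, and you handle it correctly.
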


At the end of this section, we want to show $\upsilon(m)=\upsilon(p_1)$ for good numbers $m$. For this purpose, we need the following theorem, which makes a statement about $\upsilon(m)$ of odd natural numbers $m$:

\begin{thm}
\label{1.5}
\begin{displaymath}
\upsilon(m)=\begin{cases} \upsilon(p_1) & \text{if $\upsilon(p_i)=\upsilon(p_1)$ for all $i \in \{1,...,r\}$} \\ 2 & \text{else} \end{cases}.
\end{displaymath}
\end{thm}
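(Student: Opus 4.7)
The plan is to compute $\upsilon(m) = \gamma(m)/\alpha(m)$ (Lemma \ref{0.4} (iii)) by writing both numerator and denominator as lcms over the prime power factors of $m$ and then localising the comparison at the prime $2$. First I would establish the companion of Theorem \ref{0.3} for $\alpha$,
\[
\alpha(m) = \text{lcm}[\alpha(p_i^{e_i})]_1^r,
\]
which follows from the standard implication $p_i^{e_i} \mid u_k \Leftrightarrow \alpha(p_i^{e_i}) \mid k$ (used already in the proof of Lemma \ref{0.4} (v)) together with the Chinese Remainder Theorem. Since $\upsilon(m) \in \{1,2,4\}$ (Lemma \ref{0.4} (iv)), the value of $\upsilon(m)$ is completely determined by the $2$-adic valuation
\[
v_2(\upsilon(m)) = \max_i v_2(\gamma(p_i^{e_i})) - \max_i v_2(\alpha(p_i^{e_i})).
\]

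Next I would tabulate the possible pairs $(v_2(\gamma(p_i^{e_i})), v_2(\alpha(p_i^{e_i})))$. By Theorem \ref{0.2}, $\gamma(p_i^{e_i})/\gamma(p_i)$ is a power of the odd prime $p_i$, so $v_2(\gamma(p_i^{e_i})) = v_2(\gamma(p_i))$; and by Lemma \ref{0.4} (iii,v), $\alpha(p_i^{e_i}) = \gamma(p_i^{e_i})/\upsilon(p_i)$. Lemma \ref{1.3} then delivers three cases for the pair: it equals $(1,1)$ when $\upsilon(p_i) = 1$, equals $(t_i, t_i - 1)$ with $t_i \geq 3$ when $\upsilon(p_i) = 2$, and equals $(2,0)$ when $\upsilon(p_i) = 4$.

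If all $\upsilon(p_i)$ share a common value $v$, then $\alpha(p_i^{e_i}) = \gamma(p_i^{e_i})/v$ for each $i$, which gives $\alpha(m) = \gamma(m)/v$ and hence $\upsilon(m) = v$. Otherwise I would inspect the four mixed possibilities for the set $\{\upsilon(p_i) : 1 \leq i \leq r\}$ and verify in each that the two maxima differ by exactly one, forcing $\upsilon(m) = 2$. In every mixed case involving $\upsilon = 2$, both maxima are attained at the same prime maximising $t_i$, because $t_i - 1 \geq 2$ dominates the $\alpha$-valuations $1$ and $0$ coming from the other types; in the remaining mixed case $\{1,4\}$ the $\gamma$-maximum is $2$ (attained by a $\upsilon = 4$ prime) while the $\alpha$-maximum is $1$ (attained by a $\upsilon = 1$ prime), still with difference one.

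The only conceptual step is the reduction to $v_2$; everything after that is a finite case analysis driven entirely by Lemma \ref{1.3}. The main point requiring care is keeping track of which prime attains each maximum, since in the mixed case $\{1,4\}$ the two maxima are achieved by different primes.
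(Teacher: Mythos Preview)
Your proposal is correct and follows essentially the same route as the paper's proof: both compute $\upsilon(m)=\gamma(m)/\alpha(m)$, identify $\alpha(m)=\mathrm{lcm}[\alpha(p_i^{e_i})]_1^r$, observe that only the $2$-parts matter, and finish by a case analysis driven by Lemma~\ref{1.3}. The only cosmetic difference is that the paper packages the localisation at $2$ as the ratio $\mathrm{lcm}[2^{k_i}]/\mathrm{lcm}[2^{k_i}/\upsilon(p_i)]$ rather than in the language of $v_2$, and groups the mixed cases slightly differently.
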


\begin{proof}
We have $u_k \equiv 0 \ (m) \Leftrightarrow [\forall i=1,...,r: \frac{\gamma(p_i^{e_i})}{\upsilon(p_i^{e_i})}=\alpha(p_i^{e_i}) \mid k] \Leftrightarrow \text{lcm}\left[\frac{\gamma(p_i^{e_i})}{\upsilon(p_i^{e_i})}\right]_1^r \mid k$, which implies with $t:=\text{lcm}\left[\frac{\gamma(p_i^{e_i})}{\upsilon(p_i^{e_i})}\right]_1^r$ and $\frac{\gamma(p_i^{e_i})}{\upsilon(p_i^{e_i})} \mid \gamma(p_i^{e_i}) \mid \gamma(m)$
\begin{displaymath}
\upsilon(m)=\left|\left\{d \cdot t \ | \ d \in \N, \ 0 \leq d < \frac{\gamma(m)}{t}\right\}\right|=\frac{\gamma(m)}{t}.
\end{displaymath}
Let $\gamma(p_i^{e_i})=2^{k_i} \cdot h_i$ with odd $h_i$. Together with theorem \ref{0.3} and lemma \ref{0.4} (v), we obtain
\begin{displaymath}
\upsilon(m)=\frac{\gamma(m)}{t}=\frac{\text{lcm}[2^{k_i}]_1^r}{\text{lcm}\left[\frac{2^{k_i}}{\upsilon(p_i)}\right]_1^r}, 
\end{displaymath}
where $\frac{2^{k_i}}{\upsilon(p_i)} \in \N$ holds due to $\upsilon(p_i) \in \{1,2,4\}$ (cf. Lemma \ref{0.4} (iv)). From this, the assertion follows by lemma 1.3 (ii)-(iv) and a study of the cases \glqq ${[\forall i: \upsilon(p_i) \neq 2] \wedge [\exists l_1,l_2: \upsilon(p_{l_1})=1 \wedge \upsilon(p_{l_2})=4]}$\grqq, \glqq ${\exists l: \upsilon(p_l)=2}$\grqq \ and \glqq $\forall i: {\upsilon(p_i)=\upsilon(p_1)}$\grqq.
\end{proof}

The statement \glqq $\upsilon(m)=\upsilon(p_1)$ for good numbers $m$\grqq \ now follows directly from corollary \ref{1.4} (ii).

\section{Wall-Sun-Sun primes}

We call a prime number $p$ a Wall-Sun-Sun prime if $p^2 \mid u_{p-\left(\frac{p}{5}\right)}$ or equivalently $p^2\mid u_{\gamma(p)}$, where $\left(\frac{p}{5}\right)$ is the Legendre symbol of $5$ and $p$. So far (as of November 2015), no such primes have been found (cf. \cite{Weisstein}). In this section, we want to show that the non-existence of Wall-Sun-Sun primes is equivalent to the statement that for $m \in \N\setminus\{0,1\}$ the property \glqq $m^2 \mid u_{\gamma(m)}$\grqq \ is only true for $m=6$ and $m=12$. Only the direction \glqq $\Rightarrow$\grqq \ is not trivial. Therefore, let $m=2^k \cdot n \geq 2$ with $k,n \in \N$ and odd $n$, with which the cases \glqq $k=0$ and $n \geq 3$\grqq \ (cf. lemma \ref{2.1} und \ref{2.2}), \glqq $n=1$\grqq \ (cf. lemma \ref{2.3}) and \glqq $k \geq 1$ and $n \geq 3$\grqq \ (cf. theorem \ref{2.4}) must be studied.

\begin{lemma} \hspace{0.1mm}
\label{2.1}
\begin{enumerate}
\item
$\forall k,n \in \N\setminus\{0\}: u_{k \cdot n}=\sum^{n}_{i=1} \binom n i \cdot u_{i} \cdot u_k^i \cdot u_{k-1}^{n-i}$ {\rm{(cf. \cite[S.18]{Reinecke})}}.
\item
$\forall k,n \in \N \setminus \{0\} \ \forall a,l \in \N: (l \leq k \wedge n^l \mid \frac{u_{a \cdot \gamma(n^k)}}{u_{\gamma(n^k)}} \Rightarrow n^l \mid a)$.
\item
$\forall p \geq 3 \ \forall k \in \N \setminus \{0\}: (p^2 \nmid u_{\gamma(p)} \Rightarrow p^{2k} \nmid u_{\gamma(p^k)})$.
\end{enumerate}
\end{lemma}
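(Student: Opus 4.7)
The plan is to prove the three parts in order, with (i) supplying the computational identity that drives (ii) and (iii).

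For (i) I would first establish the algebraic decomposition $P^{k} = u_{k-1}\,\mathrm{Id} + u_k P$, verified entrywise via Lemma \ref{0.1} and the recurrence $u_{k+1} = u_{k-1} + u_k$. Since $\mathrm{Id}$ and $P$ commute, the ordinary binomial theorem applies to $P^{kn} = (P^{k})^{n}$ and yields $P^{kn} = \sum_{i=0}^{n}\binom{n}{i}\,u_k^{\,i}\,u_{k-1}^{\,n-i}\,P^{i}$. Reading the $(1,2)$-entry via Lemma \ref{0.1} gives the stated formula, the $i=0$ term dropping because $u_0 = 0$.

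For (ii), set $g := \gamma(n^{k})$ and apply (i) with $(k,n)\mapsto(g,a)$ to write $u_{ag} = u_g\bigl(a\,u_{g-1}^{\,a-1} + \sum_{i=2}^{a}\binom{a}{i}\,u_i\,u_g^{\,i-1}\,u_{g-1}^{\,a-i}\bigr)$. By the definition of $\gamma(n^{k})$ we have $u_g\equiv 0$ and $u_{g-1}\equiv 1 \pmod{n^{k}}$, so reducing modulo $n^{l}$ (using $l\leq k$) annihilates every $i\geq 2$ summand and reduces $u_{g-1}^{\,a-1}$ to $1$. Hence $u_{ag}/u_g \equiv a \pmod{n^{l}}$, and the hypothesis $n^{l}\mid u_{ag}/u_g$ forces $n^{l}\mid a$.

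For (iii), since $p\mid u_{\gamma(p)}$ always, the hypothesis $p^{2}\nmid u_{\gamma(p)}$ means $v_p(u_{\gamma(p)}) = 1$; equivalently $\epsilon = 1$ in Theorem \ref{0.2}, whence $\gamma(p^{k}) = p^{k-1}\gamma(p)$. Writing $g := \gamma(p)$ and $N := p^{k-1}$, I would apply (i) to $u_{Ng}$ and compute the $p$-adic valuation of each summand, using $v_p(u_g)=1$, $v_p(u_{g-1})=0$, and the classical identity $v_p\!\bigl(\binom{p^{k-1}}{i}\bigr) = k-1-v_p(i)$. The $i$-th summand then has valuation $(k-1) - v_p(i) + v_p(u_i) + i$; the $i=1$ term contributes exactly $k$, and provided every $i\geq 2$ summand has strictly larger valuation, the sum satisfies $v_p(u_{\gamma(p^{k})}) = k < 2k$, as required.

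The main obstacle is this last valuation comparison in (iii): one must verify $i - v_p(i) + v_p(u_i) \geq 2$ for every $i\geq 2$ and every odd prime $p$. The case $i = 2$ yields equality (using $v_p(2) = 0$ since $p\geq 3$, and $u_2 = 1$); for $i\geq 3$ the bound $v_p(i)\leq \log_p(i)$ combined with monotonicity of $x\mapsto x-\log_p x$ for $x\geq 1$ gives $i - v_p(i) \geq 2$ already without needing the nonnegative term $v_p(u_i)$, the tightest subcase being $i = p = 3$. No deeper input is needed beyond this careful bookkeeping.
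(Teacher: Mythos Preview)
Your arguments for (i) and (ii) are correct and match the paper's treatment: the paper simply cites (i) from the literature, and your matrix--binomial derivation is the standard proof; for (ii) the paper expands $u_{a\gamma(n^k)}/u_{\gamma(n^k)}$ via (i) exactly as you do, obtaining the congruence $\equiv a\cdot u_{\gamma(n^k)-1}^{\,a-1}\pmod{n^l}$ and concluding from $n\nmid u_{\gamma(n^k)-1}$. You go one step further and note $u_{g-1}\equiv 1\pmod{n^k}$, which is a harmless sharpening.

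For (iii) you take a genuinely different route. The paper argues via the telescoping product
\[
u_{\gamma(p^k)} \;=\; u_{\gamma(p)}\cdot\prod_{i=1}^{k-1}\frac{u_{p^{i}\gamma(p)}}{u_{p^{i-1}\gamma(p)}},
\]
invoking an external result (\cite[3.37(b)]{Lausch}) that each ratio is not divisible by $p^{2}$, whence $p^{k+1}\nmid u_{\gamma(p^k)}$. You instead expand $u_{p^{k-1}\gamma(p)}$ directly with (i) and compute the $p$-adic valuation of each summand using $v_p\bigl(\binom{p^{k-1}}{i}\bigr)=k-1-v_p(i)$, showing the $i=1$ term has valuation exactly $k$ while every $i\ge 2$ term has valuation at least $k+1$. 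Your valuation bookkeeping is correct, including the borderline cases $i=2$ and $i=p=3$. The trade-off: the paper's telescoping argument is shorter but imports a lemma from outside, whereas your approach is fully self-contained within the identities already established here and in fact yields the exact value $v_p(u_{\gamma(p^k)})=k$ rather than merely an upper bound.
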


\begin{proof} \hspace{0.1mm}
\begin{enumerate}
\item[(ii)]
The cases $a=0$, $a=1$ and $n=1$ are trivial. Let $a,n \geq 2$. By (i) and $n^l \mid n^k \mid u_{\gamma(n^k)}$ follows ${\frac{u_{a \cdot \gamma(n^k)}}{u_{\gamma(n^k)}}=\sum^{a}_{i=1} \binom a i \cdot u_{i} \cdot u_{\gamma(n^k)}^{i-1} \cdot u_{\gamma(n^k)-1}^{a-i}} \equiv a \cdot u_{\gamma(n^k)-1}^{a-1} \ (n^l)$, whereby with $n \nmid u_{\gamma(n^k)-1}^{a-1}$ the assertion follows.
\item[(iii)]
From $p^2 \nmid u_{\gamma(p)}$, $\gamma(p^k)\underset{\ref{0.2}}{=}p^{k-1} \cdot \gamma(p)$ and $p^2 \nmid \frac{u_{p^i \cdot \gamma(p)}}{u_{p^{i-1} \cdot \gamma(p)}}$ (cf. \cite[3.37 (b)]{Lausch}) follows 
\begin{displaymath}
p^{k+1} \nmid u_{\gamma(p^k)}=u_{\gamma(p)} \cdot \prod_{i=1}^{k-1} \frac{u_{p^i \cdot \gamma(p)}}{u_{p^{i-1} \cdot \gamma(p)}}
\end{displaymath} 
and thereby $p^{2k} \nmid u_{\gamma(p^k)}$. \qedhere
\end{enumerate}
\end{proof}

Now we are able to prove lemma \ref{2.2}. Together with lemma \ref{2.3}, it will emerge that $m$ must be an even number, which is not represented by a power of two, to satisfy $m^2 \mid u_{\gamma(m)}$ based on the condition that no Wall-Sun-Sun primes exists.

\begin{lemma}
\label{2.2} 
Let $m=\prod_{i=1}^{r} p_i^{e_i}$ with $e_i \geq 1$ and pairwise different primes $p_i \geq 3$. Then follows from the non-existence of Wall-Sun-Sun primes that $m^2 \nmid u_{\gamma(m)}$.
\end{lemma}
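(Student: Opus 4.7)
My plan is a contradiction argument that forces the largest prime divisor of $m$ to bound itself from above. I would assume $m^2 \mid u_{\gamma(m)}$, fix an index $i$, and write $\gamma := \gamma(p_i^{e_i})$, $a_i := \gamma(m)/\gamma$, and $v_i$ for the exact $p_i$-adic exponent of $u_\gamma$ (the largest $a \in \N$ with $p_i^a \mid u_\gamma$). By definition of $\gamma(p_i^{e_i})$ one has $e_i \leq v_i$, and Lemma \ref{2.1}(iii) together with the non-existence of Wall--Sun--Sun primes gives $v_i \leq 2e_i - 1$. From $p_i^{2e_i} \mid u_{a_i \gamma}$ I would deduce $p_i^{2e_i - v_i} \mid u_{a_i\gamma}/u_\gamma$, and since $2e_i - v_i \leq e_i$, Lemma \ref{2.1}(ii) (applied with $l := 2e_i - v_i$) then yields $p_i^{2e_i - v_i} \mid a_i$.

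Next I would read this off against $\gamma(m) = \text{lcm}[\gamma(p_j^{e_j})]_1^r$. Theorem \ref{0.2} (with $\epsilon = 1$) gives $\gamma = p_i^{e_i-1}\gamma(p_i)$, so, writing $v_{p_i}$ for the $p_i$-adic valuation,
\[
v_{p_i}(\gamma(m)) \geq (2e_i - v_i) + (e_i - 1) + v_{p_i}(\gamma(p_i)).
\]
The $j = i$ contribution to the lcm has valuation $e_i - 1 + v_{p_i}(\gamma(p_i))$, strictly smaller than the right-hand side (the gap is $2e_i - v_i \geq 1$), while for $j \neq i$ we have $v_{p_i}(\gamma(p_j^{e_j})) = v_{p_i}(\gamma(p_j))$ because $p_i \neq p_j$. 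Hence the maximum in the lcm must be attained at some index $j(i) \neq i$ (in particular $r \geq 2$), and after using $v_i \leq 2e_i - 1$ the inequality boils down to $v_{p_i}(\gamma(p_{j(i)})) \geq e_i$.

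To finish I would invoke $\gamma(p) = \upsilon(p)\alpha(p)$ with $\upsilon(p) \in \{1,2,4\}$ coprime to the odd prime $p_i$, together with the classical bound $\alpha(p) \leq p + 1$ coming from $p \mid u_{p - \left(\frac{p}{5}\right)}$ (already used in defining Wall--Sun--Sun primes). Assembling these,
\[
p_i^{e_i} \leq p_i^{v_{p_i}(\alpha(p_{j(i)}))} \leq \alpha(p_{j(i)}) \leq p_{j(i)} + 1.
\]
Choosing $i$ so that $p_i = \max\{p_1,\ldots,p_r\}$ forces $p_{j(i)} \leq p_i - 2$ (distinct odd primes differ by at least two), collapsing the chain to $p_i^{e_i} \leq p_i - 1$: the desired contradiction.

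The most delicate part is the valuation bookkeeping in the first paragraph: securing $2e_i - v_i \leq e_i$ so that Lemma \ref{2.1}(ii) applies, and then noticing that the resulting lower bound on $v_{p_i}(\gamma(m))$ strictly exceeds the $j = i$ contribution is what forces the lcm to ``overflow'' onto a different prime. Once one has $v_{p_i}(\gamma(p_{j(i)})) \geq e_i$ for some $j(i) \neq i$, the rest is just the classical $\alpha(p) \leq p + 1$ and a comparison of sizes at the largest prime factor.
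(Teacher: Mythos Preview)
Your argument is correct and shares its engine with the paper's proof---both start from Lemma~\ref{2.1}(iii) to see that $p_i^{2e_i}\nmid u_{\gamma(p_i^{e_i})}$ and then use Lemma~\ref{2.1}(ii) to push the missing $p_i$-power into $a_i=\gamma(m)/\gamma(p_i^{e_i})$---but the endgames differ. The paper only extracts $p_i\mid a_i$, deduces $p_i\mid\prod_{j\neq i}\gamma(p_j)$ for every $i$, multiplies these together to get $\prod_i p_i \mid \prod_i \gamma(p_i)$, and then bounds each $\gamma(p_i)$ by $20$, $p_i-1$, or $2(p_i+1)$ to force the single exception $m=5$, which is ruled out by hand. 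You instead keep track of the full exponent $2e_i-v_i$, translate it into a $p_i$-adic lower bound on $\gamma(m)$ that overshoots the $j=i$ term of the lcm, and conclude $p_i^{e_i}\le\alpha(p_{j(i)})\le p_{j(i)}+1$ for some $j(i)\neq i$; picking $p_i$ maximal finishes immediately. Your route avoids the global product inequality and the residual case $m=5$, at the price of a little more bookkeeping with valuations; the paper's route is coarser but needs only the single divisibility $p_i\mid a_i$ rather than the refined exponent estimate.
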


\begin{proof}
Suppose that $m^2=\prod_{i=1}^r p_i^{2e_i} \mid u_{\gamma(m)}$. Since ${p_i^{2e_i} \mid u_{\gamma(m)}}$ and $p_i^{2e_i} \nmid u_{\gamma(p_i^{e_i})}$ (cf. lemma \ref{2.1} (iii)), $\frac{u_{\gamma(m)}}{u_{\gamma(p_i^{e_i})}} \in \N$ is divided by $p_i$. From this together with $\gamma(m)\underset{\ref{0.3}}{=}\frac{\text{lcm}[\gamma(p_j^{e_j})]_1^r}{\gamma(p_i^{e_i})} \cdot \gamma(p_i^{e_i})$ and lemma \ref{2.1} (ii) follows $p_i \mid \frac{\text{lcm}[\gamma(p_j^{e_j})]_1^r}{\gamma(p_i^{e_i})} \mid \frac{\prod_{j=1}^r \gamma(p_j^{e_j})}{\gamma(p_i^{e_i})}=\prod_{\substack{1 \leq j \leq r \\ j \neq i}} \gamma(p_j^{e_j})\underset{\ref{0.2}}{=}\prod_{\substack{1 \leq j \leq r \\ j \neq i}} p_j^{e_j-1}\gamma(p_j)$ and thereby from the divisiveness of the prime factors ${p_i \mid \prod_{\substack{1 \leq j \leq r \\ j \neq i}} \gamma(p_j) \mid \prod_{j=1}^r \gamma(p_j)}$ for all $1 \leq i \leq r$, which implies $\prod_{i=1}^r p_i \mid \prod_{i=1}^r \gamma(p_i)$. \\
It is $\gamma(5)=20$ and $5 \neq p \in P_1 \dot\cup P_2$ with $P_j:=\{p: \gamma(p) \mid j(p+(-1)^j)\}$ (vgl. \cite[2.3]{Behrends}) and thus $\prod\limits_{i=1}^r p_i \mid \prod\limits_{p_i=5} 20 \cdot \prod\limits_{p_i \in P_1} (p_i-1) \cdot \prod\limits_{p_i \in P_2} 2(p_i+1)$. The oddness of all $p_i$ implies $\prod\limits_{i=1}^r p_i \mid \prod\limits_{p_i=5} 5 \cdot \prod\limits_{p_i \in P_1} \frac{p_i-1}{2} \cdot \prod\limits_{p_i \in P_2} \frac{p_i+1}{2}$, whereby $m=5$ in contradiction to $5^2 \nmid u_{\gamma(5)}$ follows.
\end{proof}

\begin{lemma}
\label{2.3}
$(2^k)^2 \nmid u_{\gamma(2^k)}$ holds for all $k \geq 1$.
\end{lemma}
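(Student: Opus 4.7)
The plan is to induct on $k \geq 3$, proving simultaneously that $\gamma(2^{k+1}) = 2\gamma(2^k)$ and that $v_2(u_{\gamma(2^k)}) = k+1$, where $v_2$ denotes the $2$-adic valuation. Since $k+1 < 2k$ whenever $k \geq 2$, this immediately yields $(2^k)^2 \nmid u_{\gamma(2^k)}$, provided the small cases $k = 1, 2$ are handled separately by direct check: $\gamma(2) = 3$ with $u_3 = 2$ (so $v_2 = 1 < 2$), and $\gamma(4) = 6$ with $u_6 = 8$ (so $v_2 = 3 < 4$). The induction base $k = 3$ reads $\gamma(8) = 12$ and $u_{12} = 144 = 2^4 \cdot 9$, giving $v_2 = 4 = k+1$, which is verifiable in one line.

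For the inductive step, I would set $n := \gamma(2^k)$ and write $u_n = 2^{k+1} c$ with $c$ odd (the inductive hypothesis). By Lemma \ref{0.1} we have $P^n = \mathrm{Id} + 2^k M$ for some integer matrix $M$, and squaring gives $P^{2n} \equiv \mathrm{Id} \pmod{2^{k+1}}$; combined with the general divisibility $n \mid \gamma(2^{k+1})$, this forces $\gamma(2^{k+1}) \in \{n, 2n\}$. To rule out $\gamma(2^{k+1}) = n$, write $u_{n+1} = 1 + 2^k s$ and invoke the Cassini-type identity $u_{n-1} u_{n+1} - u_n^2 = (-1)^n = 1$ (valid since $n$ is even and $\det P = -1$, so $\det P^n = 1$ via Lemma \ref{0.1}). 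Substituting $u_{n-1} = u_{n+1} - u_n$ and expanding yields $s \equiv c \pmod{2^{k-1}}$, so in particular $s$ is odd. Hence $u_{n+1} \not\equiv 1 \pmod{2^{k+1}}$, forcing $\gamma(2^{k+1}) = 2n$.

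The doubling formula from Lemma \ref{0.4}(i) then gives $u_{2n} = u_n (u_{n-1} + u_{n+1})$, and the expansion $u_{n-1} + u_{n+1} = 2 + 2^{k+1}(s - c)$ shows $v_2(u_{n-1} + u_{n+1}) = 1$. Therefore $v_2(u_{\gamma(2^{k+1})}) = v_2(u_{2n}) = (k+1) + 1 = k + 2$, closing the induction.

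The main obstacle is the identification $\gamma(2^{k+1}) = 2\gamma(2^k)$: bounding $v_2(u_{\gamma(2^k)})$ from below is immediate, but ruling out the possibility that the order of $P$ fails to double requires extracting the precise parity of $s$, i.e.\ the residue of $u_{n+1}$ modulo $2^{k+1}$. The Cassini identity, coupled with the sharp $2$-adic control on $u_n$ carried by the induction, is what makes this possible; without such precise bookkeeping one could only conclude $v_2(u_{\gamma(2^k)}) \geq k$, which is far too weak.
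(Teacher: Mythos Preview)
Your argument is correct. Both you and the paper end up proving the sharp statement $v_2(u_{\gamma(2^k)})=k+1$ for $k\ge 2$, and both use the doubling identity $u_{2n}=u_n(u_{n-1}+u_{n+1})$ together with $u_{n-1}+u_{n+1}\equiv 2\pmod 4$ as the engine. The organization, however, is genuinely different.

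The paper does not induct. It takes $\gamma(2^k)=2^{k-1}\gamma(2)$ as given (citing Theorem~\ref{0.2}), writes the telescoping product
\[
u_{\gamma(2^k)}=u_{2\gamma(2)}\cdot\prod_{i=2}^{k-1}\frac{u_{2^{i}\gamma(2)}}{u_{2^{i-1}\gamma(2)}}
=2^3\cdot\prod_{i=2}^{k-1}\bigl(u_{\gamma(2^i)-1}+u_{\gamma(2^i)+1}\bigr),
\]
and observes that each factor is $\equiv 2\pmod 4$ simply because $u_{\gamma(2^i)\pm 1}\equiv 1\pmod{2^i}$ with $i\ge 2$. No Cassini identity, no parity analysis of an auxiliary $s$ is needed.

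Your route, by contrast, \emph{re-proves} $\gamma(2^{k+1})=2\gamma(2^k)$ inside the induction, and for that you genuinely need the Cassini identity to pin down that $s$ is odd. This makes your proof longer but more self-contained---a real gain here, since Theorem~\ref{0.2} as stated in the paper is only for odd primes, so the paper's citation for $\gamma(2^k)=2^{k-1}\cdot 3$ is formally a gap that your version avoids. Conversely, if one is willing to import the period formula for $p=2$, the paper's telescoping argument is noticeably shorter and sidesteps the delicate bookkeeping you flag in your last paragraph.
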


\begin{proof}
The case $k=1$ is immediately apparent. For $k \geq 2$ holds
\begin{displaymath}
{u_{\gamma(2^k)}\underset{\ref{0.2}}{=}u_{2^{k-1} \cdot \gamma(2)}=u_{2 \cdot \gamma(2)} \cdot \prod_{i=2}^{k-1} \frac{u_{2^i \cdot \gamma(2)}}{u_{2^{i-1} \cdot \gamma(2)}}=2^3 \cdot \prod_{i=2}^{k-1} \frac{u_{2^i \cdot \gamma(2)}}{u_{2^{i-1} \cdot \gamma(2)}}}
\end{displaymath}
with $\frac{u_{2^i \cdot \gamma(2)}}{u_{2^{i-1} \cdot \gamma(2)}}\underset{\ref{0.4} (i)}{=} u_{\gamma(2^i)-1}+u_{\gamma(2^i)+1} \equiv 2 \ (4)$ for $i \geq 2$ since $4 \mid 2^i \mid u_{\gamma(2^i) \pm 1}-1$.
Therefore, $2^{k+2} \nmid u_{\gamma(2^k)}$ and due to $k \geq 3$ also $2^{2k} \nmid u_{\gamma(2^k)}$.
\end{proof}

In the proof we showed that $2^{k+1} \mid u_{\gamma(2^k)}$ and $2^{k+2} \nmid u_{\gamma(2^k)}$ for $k \geq 2$, which leads together with $\gamma(2^k)=2^{k-1} \cdot \gamma(2)$ to $2=\upsilon(2^e) \neq \upsilon(2)=1$ for $e \geq 3$. Therefore,  lemma \ref{0.4} (v) holds only for primes $p \neq 2$. The conclusion of this paper is the proof of the following theorem.

\begin{thm}
\label{2.4}
\begin{displaymath}
p^2 \nmid u_{\gamma(p)} \ \text{for all} \ p \in \mathbb{P} \Leftrightarrow [m \in \N \setminus \{0,1\} \wedge m^2 \mid u_{\gamma(m)} \Rightarrow m \in \{6,12\}].
\end{displaymath}
\end{thm}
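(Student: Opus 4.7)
My plan. The direction ``$\Leftarrow$'' is immediate: a Wall--Sun--Sun prime $p$ is a natural number $\geq 2$ with $p^2 \mid u_{\gamma(p)}$ and $p \notin \{6, 12\}$, contradicting the right-hand side. For ``$\Rightarrow$'', I would assume no Wall--Sun--Sun primes exist, take $m \in \N \setminus \{0, 1\}$ with $m^2 \mid u_{\gamma(m)}$, and write $m = 2^k \cdot n$ with $n$ odd and $k \geq 0$. The sub-cases $k = 0, n \geq 3$ and $n = 1, k \geq 1$ have already been ruled out by lemmas \ref{2.2} and \ref{2.3}, so I restrict to the remaining case $k \geq 1$ and $n = \prod_{i=1}^r p_i^{e_i} \geq 3$ with distinct odd primes $p_i$. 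The plan is to derive three successive constraints that together pin $m$ down to $\{6, 12\}$.

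\emph{Odd-part step.} I would replay the argument of lemma \ref{2.2} with $\gamma(m) = {\rm lcm}(\gamma(2^k), \gamma(n))$ in place of $\gamma(n)$: lemma \ref{2.1} (iii) and (ii) yield $p_i \mid \gamma(m)/\gamma(p_i^{e_i})$ for each $i$, which by $\gamma(2^k) = 3 \cdot 2^{k-1}$ and theorem \ref{0.2} simplifies to $p_i \mid 3 \prod_{j \neq i} \gamma(p_j)$. If $r \geq 2$, taking $p_r$ the largest prime (so $p_r > 3$) gives $p_r \mid \gamma(p_j)$ for some $j < r$, and since $p_r$ is odd it divides the odd part of $\gamma(p_j)$, which is at most $p_j + 1$ by the Wall bounds $\gamma(p) \mid p - 1$, $\gamma(p) \mid 2(p+1)$, and $\gamma(5) = 20$ (as used in the proof of lemma \ref{2.2}). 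Hence $p_r \leq p_j + 1$, which combined with $p_r > p_j$ forces $p_r = p_j + 1$, impossible for odd primes. So $r = 1$ and $p_1 \mid 3$, i.e.\ $n = 3^e$.

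\emph{$3$-adic and $2$-adic steps.} For $n = 3^e$, I would expand $u_{c \cdot \gamma(3^e)}$ via lemma \ref{2.1} (i) to obtain a lift-the-exponent formula for the $3$-exponent of $u_{\gamma(m)}$: since $u_{\gamma(3^e) - 1}$ is coprime to $3$, the leading term dominates, yielding $e + a$, where $a$ is the $3$-exponent of $\gamma(m)/\gamma(3^e)$; a direct check (using $\gamma(3^e) = 8 \cdot 3^{e-1}$ and $\gamma(2^k) = 3 \cdot 2^{k-1}$) gives $a = 1$ for $e = 1$ and $a = 0$ for $e \geq 2$, so $3^{2e} \mid u_{\gamma(m)}$ forces $e = 1$. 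For $m = 2^k \cdot 3$ then $\gamma(m) = 3 \cdot 2^{\max(k-1, 3)}$, and the analogous $2$-adic expansion (essentially repeating the proof of lemma \ref{2.3}, which gives the exponent of $2$ in $u_{3 \cdot 2^j}$ as $j + 2$ for $j \geq 1$) forces $k \leq 2$ from $2^{2k} \mid u_{\gamma(m)}$, giving exactly $m \in \{6, 12\}$.

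The principal obstacle will be the odd-part step, where ruling out $r \geq 2$ requires combining the arithmetic divisibility $p_r \mid \gamma(p_j)$ with the Wall size bound $\gamma(p) \leq 2(p+1)$ and the exception at $p = 5$; everything else is small-case arithmetic. The subsequent lift-the-exponent calculations will be routine applications of lemma \ref{2.1} (i) once leading-term dominance is verified.
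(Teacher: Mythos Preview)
Your proposal is correct, but it takes a longer detour than the paper in the odd-part step. The paper does not work prime-by-prime: it applies Lemma~\ref{2.1}~(ii) directly to the \emph{composite} odd part $n$ (with $k=l=1$), obtaining $n \mid \dfrac{\mathrm{lcm}(\gamma(n),\gamma(2^k))}{\gamma(n)} \mid \gamma(2^k)=3\cdot 2^{k-1}$, which instantly gives $n=3$ and makes both your largest-prime extremal argument and your separate $3$-adic step for $e\geq 2$ unnecessary. Symmetrically, the paper handles the $2$-adic side by the same Lemma~\ref{2.1}~(ii) applied to $h_2=2^k$ (getting $2^{k-1}\mid \mathrm{lcm}(\gamma(3),\gamma(2^k))/\gamma(2^k)$ and checking $k=3$ and $k\geq 4$), rather than by a lift-the-exponent computation of $v_2(u_{3\cdot 2^j})$. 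What your route buys is that it never needs the implication ``$n^2\mid u_{\gamma(m)}$ and $n^2\nmid u_{\gamma(n)}$ $\Rightarrow$ $n\mid u_{\gamma(m)}/u_{\gamma(n)}$'', which the paper invokes from Lemma~\ref{2.2} but which is not a general arithmetical fact (Lemma~\ref{2.2} only guarantees failure of $n^2\mid u_{\gamma(n)}$ at \emph{some} prime, not at all of them); your prime-power version via Lemma~\ref{2.1}~(iii) is cleaner at that point. So: the paper's path is shorter and more uniform (one lemma, applied twice), while yours is more hands-on and avoids a slightly glossed step.
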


\begin{proof}
The direction \glqq $\Leftarrow$\grqq \ is trivial. Due to lemma \ref{2.2} and \ref{2.3}, we only need to examine numbers $m=2^k \cdot n$ with $k \geq 1$ and odd $n \geq 3$. \\ 
Due to $\text{gcd}(2^k,n)=1$, we obtain by theorem \ref{0.3} $\gamma(m)=\text{lcm}(\gamma(2^k),\gamma(n))$. In the following, let ${h_1:=n}$ and $h_2:=2^k$. Suppose $m^2=h_1^2 \cdot h_2^2 \mid u_{\gamma(m)}$. Then ${h_j^2 \mid u_{\gamma(m)}}$, whereby together with $u_{\gamma(h_j)} \mid u_{\gamma(m)}$ and $j^2 \cdot h_j^{|(-1)^j j-1|} \nmid u_{\gamma(h_j)}$ (cf. for $h_1$ lemma \ref{2.2} and for $h_2$ the proof of lemma \ref{2.3}) $\frac{h_j}{j} \mid \frac{u_{\gamma(m)}}{u_{\gamma(h_j)}}$ follows. With this and $\gamma(m)=\frac{\text{lcm}(\gamma(h_1),\gamma(h_2))}{\gamma(h_j)} \cdot \gamma(h_j)$, we conclude from lemma \ref{2.1} (ii) $\frac{h_j}{j} \mid \frac{\text{lcm}(\gamma(h_1),\gamma(h_2))}{\gamma(h_j)}$. Thereby, $n=3$, because from ${n \mid \frac{\gamma(n) \cdot \gamma(2^k)}{\gamma(n)}=2^{k-1} \cdot \gamma(2)}$ and $\text{gcd}(2^{k-1},n)=1$ follows $n \mid \gamma(2)=3$, and ${k \in \{1,2\}}$ because $2^{k-1} \nmid \frac{\text{lcm}(\gamma(3),\gamma(2^k))}{\gamma(2^k)}=1$ for $k \geq 4$ and $2^{3-1} \nmid \frac{\text{lcm}(\gamma(3),\gamma(2^3))}{\gamma(2^3)}=2$. Calculating shows ${6^2 \mid 12^2 \mid u_{\gamma(6)}=u_{\gamma(12)}=u_{24}=46368}$.
\end{proof}

\end{document}